\numberwithin{equation}{section}
\pgfplotsset{width=10cm,compat=1.9}
\newtheorem{theorem}{Theorem}[section]
\theoremstyle{definition}
\newtheorem{definition}[theorem]{Definition}
\newtheorem{prop}[theorem]{Proposition}
\newtheorem{prob}{Problem}
\newtheorem{obs}[theorem]{Remark}
\numberwithin{equation}{section}
\newtheorem*{acknowledgements}{Acknowledgements}
\title{Gears and pulleys in non-Euclidean space forms}
\author{Heleno S. Cunha, Lucas H. R. de Souza and Sergio A. P. Prado}
\begin{document}

\DeclareGraphicsExtensions{.pdf,.jpg,.mps,.png,}

\maketitle

\def\eod{\hfill$\square$}

\begin{abstract}In this article we study, in their non-Euclidean versions, two important mechanical systems that are very common in numerous devices. More precisely, we study the laws governing the movement of pulley and gear systems in spherical and hyperbolic geometries. And curiously, we were able to see an interesting similarity between the determined laws.
\end{abstract}

\let\thefootnote\relax\footnote{Mathematics Subject Classification (2010). Primary: 51P05, 70B10; Secondary: 51M10.}
\let\thefootnote\relax\footnote{Keywords: Gears, pulleys, hyperbolic geometry, spherical geometry, space forms.}

\section*{Introduction}

Let's consider the following elementary problems in mechanics, involving gears and pulleys:

\begin{prob}\label{gearprob}Let's consider two gears attached to each other. a) If we rotate one gear at some angular speed, what is the angular velocity of the second gear, depending on the angular velocity of the first one and the numbers of teeth of both gears? b) If we rotate one gear at some angular speed, what is the angular velocity of the second gear, depending on the angular velocity of the first one and the radii of both gears?
\end{prob}

\begin{figure}[h]
\centering
\includegraphics[scale=0.12]{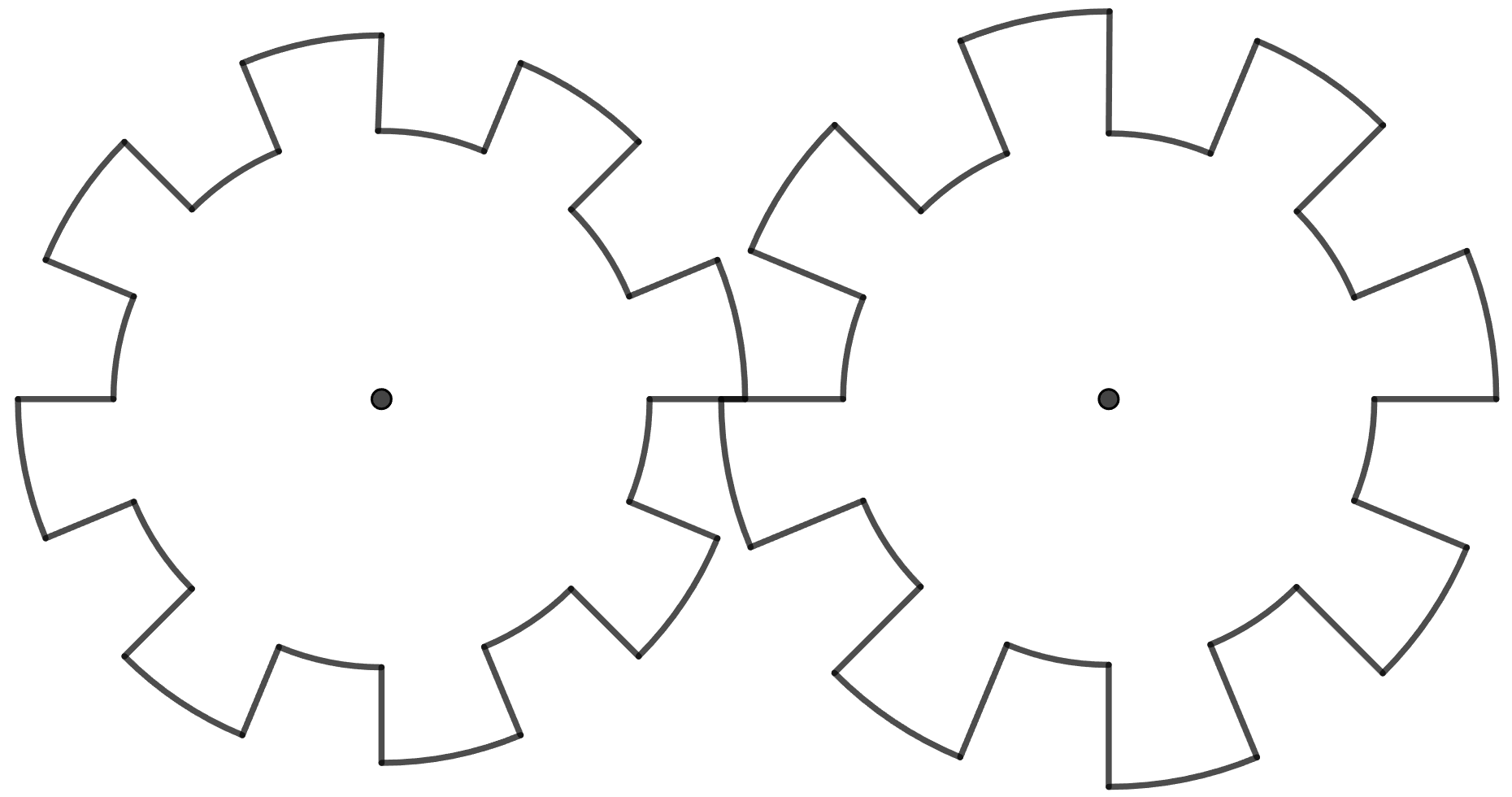}
\caption{\small A system of two gears in the Euclidean space attached to each other.}
\end{figure}

\begin{prob}\label{pulleyprob}Let's consider two pulleys attached by a tensioned belt. If we rotate one pulley at some angular speed, then it moves the belt, which transports the movement to the second pulley. Let's consider that there is no sliding motion in the process. What is the angular velocity of the second pulley, depending on the angular velocity of the first one and the radii of both pulleys?
\end{prob}

\begin{figure}[h]
\centering
\includegraphics[scale=0.1]{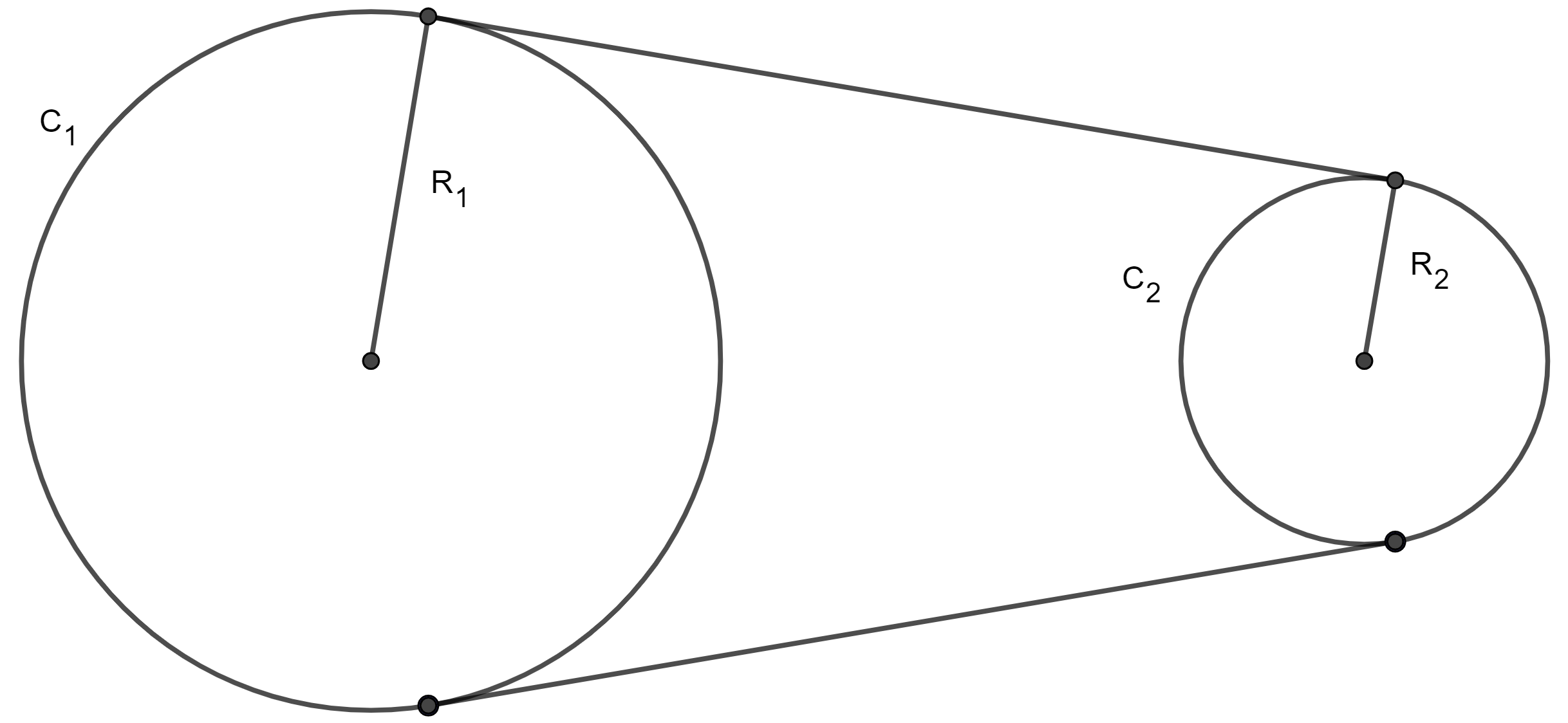}
\caption{\small A system of two pulleys in the Euclidean space attached by a tensioned belt.}
\end{figure}

Removing all of the physical characteristics of these problems, we get that it turns out to be two problems in Euclidean Geometry. These two problems have different natures, but with similar answers.

If the two gears have, respectively, $n_{1}$ and $n_{2}$ teeth and their angular velocities are $\omega_{1}$ and $\omega_{2}$, respectively, then we have the following relation (Equation (5) of page 70 of \cite{Pa}):
$$ n_{1}\omega_{1} = n_{2}\omega_{2}$$

If the two pulleys or two gears have radii $R_{1}$ and $R_{2}$ and angular velocities $\omega_{1}$ and $\omega_{2}$, respectively, then we have the following relation (Equation (2) of page 69 of \cite{Pa} for gears and page 331 of \cite{KG} for pulleys):
$$R_{1}\omega_{1} = R_{2}\omega_{2}$$

We want to see what happens if we transport both problems to Hyperbolic Geometry and Spherical Geometry. We have:

\

\textbf{Theorem \ref{teeth}} Let $C_{1}$ and $C_{2}$ be two gears attached to each other in the hyperbolic plane or in the sphere of radius $1$. If the two gears have, respectively, $n_{1}$ and $n_{2}$ teeth and angular velocities are $\omega_{1}(t)$ and $\omega_{2}(t)$, respectively, then we have the following relation:
$$n_{1}\omega_{1}(t) = n_{2}\omega_{2}(t) $$

\

\textbf{Theorems \ref{hyperbolicgear}} and \textbf{\ref{hyperbolic}} Let $C_{1}$ and $C_{2}$ be two gears attached to each other or two pulleys attached by a tensioned belt in the hyperbolic plane. If $C_{i}$ have radius $R_{i}$ and angular velocity  $\omega_{i}(t)$, then we have the following relation:

$$\sinh(R_{1}) \omega_{1}(t) =  \sinh(R_{2})\omega_{2}(t)$$

\begin{figure}[h]
\centering
\includegraphics[scale=0.1]{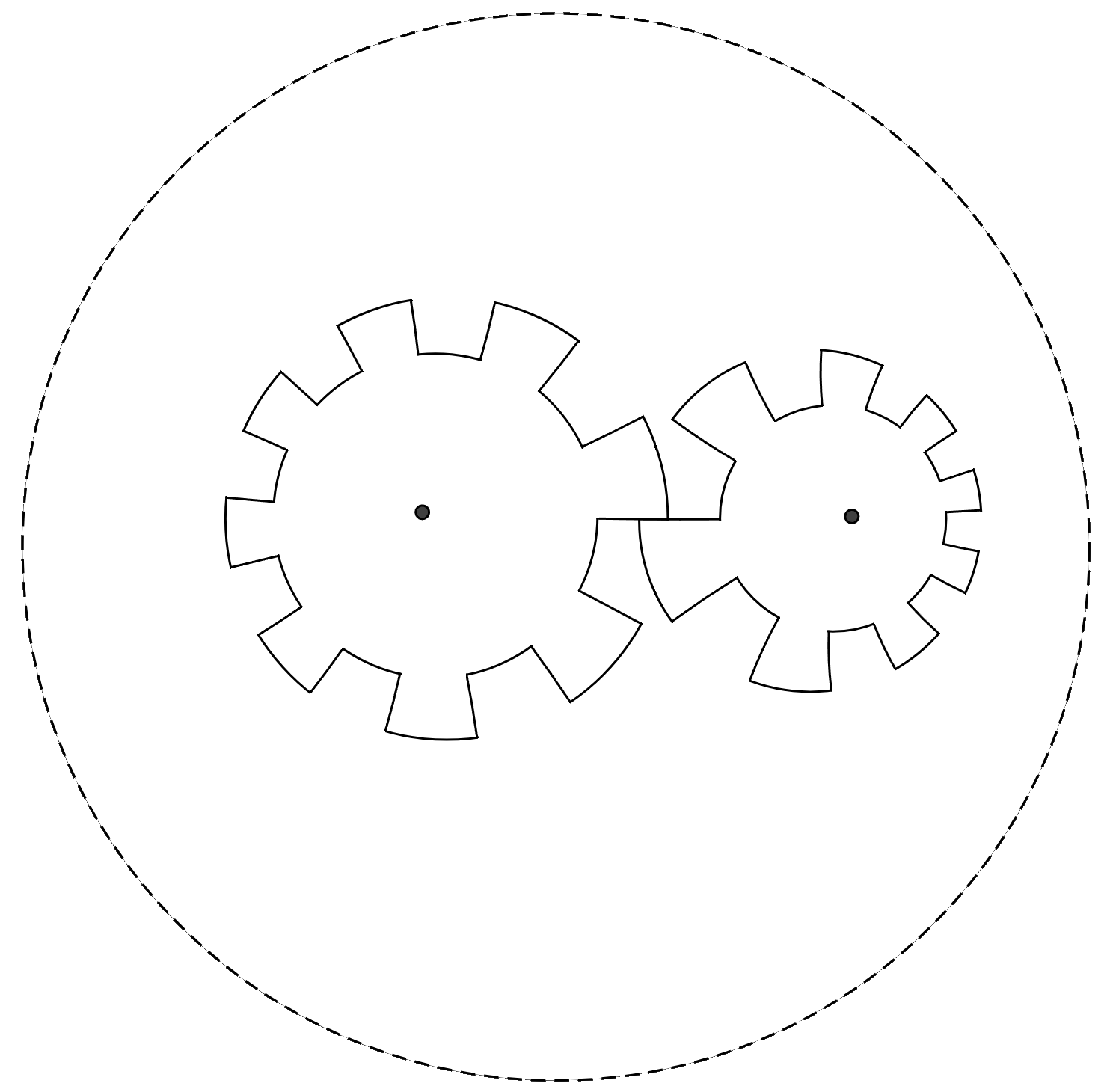} \ \ \
\includegraphics[scale=0.3]{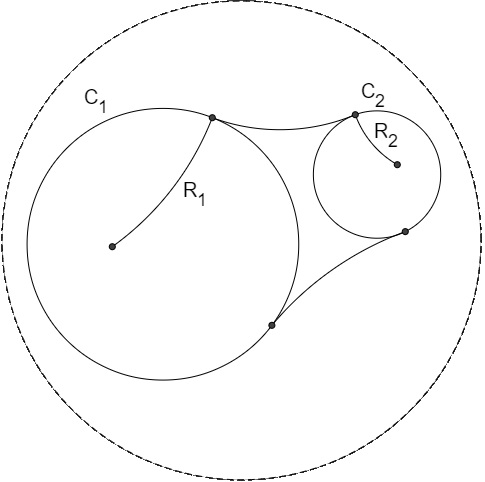}
\caption{\small The first image is a representation of two gears in the Poincaré-disk model of the hyperbolic plane. The representation of the gears' teeth is merely illustrative, as considered in \textbf{Section 1.1}. The second image is a representation of two pulleys in the Poincaré-disk model of the hyperbolic plane attached by a tensioned belt that satisfies the conditions of the theorem above. The belt is represented here by two geodesics that are tangent to both circles and it is merely illustrative.}
\end{figure}

\

\textbf{Theorems \ref{sphericalgear}} and \textbf{\ref{spherical}} Let $C_{1}$ and $C_{2}$ be two gears attached to each other or two pulleys attached by a tensioned belt in the sphere of radius $1$. If $C_{i}$ have radius $R_{i}$ and angular velocity  $\omega_{i}(t)$, then we have the following relation:
$$\sin(R_{1}) \omega_{1}(t) =  \sin(R_{2})\omega_{2}(t)$$

\begin{figure}[h]
\centering
\includegraphics[scale=0.345]{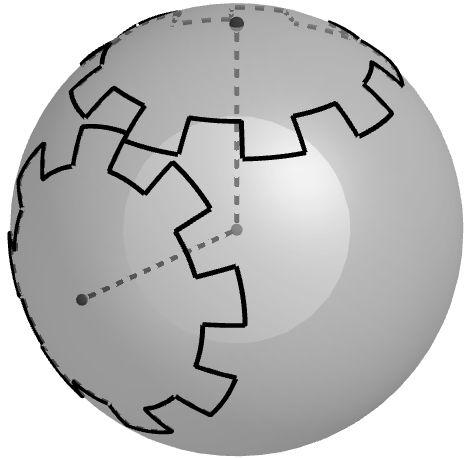} \ \ \
\includegraphics[scale=0.3]{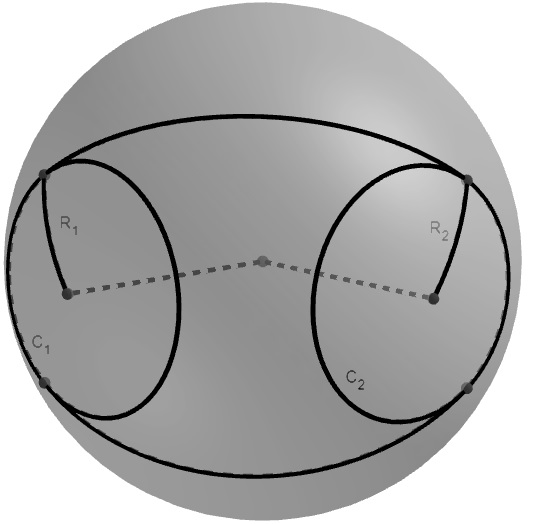}
\caption{\small The first image is a representation of two gears in the sphere. The representation of the gears' teeth is merely illustrative, as considered in \textbf{Section 1.1}. The second image is a representation of two pulleys in the sphere attached by a tensioned belt that satisfies the conditions of the theorem above. The belt is represented here by two geodesics that are tangent to both circles and it is merely illustrative.}
\end{figure}

\

There is another problem whose already well known solutions have similar behavior, when we change the geometry: take two point particles $p_{1}$ and $p_{2}$ with masses $m_{1}$ and $m_{2}$, respectively. If $p$ is the centroid of the system of particles given by these two points, then we get the following relations (called rules of lever) \cite{Ga}:

\begin{enumerate}
    \item In the euclidean plane: $d_{\mathbb{E}^{2}}(p_{1},p) m_{1} = d_{\mathbb{E}^{2}}(p_{2},p) m_{2}$
    \item In the hyperbolic plane: $\sinh(d_{\mathbb{H}^{2}}(p_{1},p)) m_{1} = \sinh(d_{\mathbb{H}^{2}}(p_{2},p)) m_{2}$
    \item In the sphere of radius $1$: $\sin(d_{\mathbb{S}^{2}}(p_{1},p)) m_{1} = \sin(d_{\mathbb{S}^{2}}(p_{2},p)) m_{2}$
\end{enumerate}

This paper is composed of three sections. In the first section we take the two problems presented above, which have physical nature, and we abstract them and formalize them to become geometric problems. We also give a proof to \textbf{Theorem \ref{teeth}} that doesn't depend on the choice of the geometry. For the  other theorems presented above, it is necessary to choose the geometry first, so the second and the third sections are devoted to briefly present the models of the geometries that we need and to do the proofs of the theorems in the context of hyperbolic geometry and spherical geometry, respectively. For the \textbf{Theorem \ref{hyperbolic}} we give two proofs (in \textbf{Sections 3.3} and \textbf{3.5}). For the first one we use the Poincaré-disk model and for the second one we use the hyperboloid model.

\begin{acknowledgements}
The authors would like to thank Solange Schardong for the discussions about this paper.

The second author was partially supported by the joint FAPEMIG/CNPq program "Programa de Apoio à Fixação de Jovens Doutores no Brasil", Grant Number BPD-00721-22 (FAPEMIG), 150784/2023-6 (CNPq).

The third author was partially supported by the CNPq, Grant number 153794/2022-4.
\end{acknowledgements}

\section{Transforming the physical problems into geometric problems}

\subsection{Gears}

A gear is a simple closed path $C$ in a space form $X$ with a finite set of teeth. Each tooth has the same length in $C$. When we have two gears attached, then one tooth of it is attached to a tooth of the other one.  If the first one is in movement, then its tooth $T$ pushes the tooth of the other gear that is attached to it to the next space, causing the movement of the second gear. We are not interested in what is the nature of the teeth or how it pushes the teeth of the other gear. It could be mechanical, magnetic, or other physical way of doing this. We care just that the realized movement is the same as the described above.

To be a geometric problem, we need to formalize what we mean by a gear, its teeth, rotation of the gear, speed of the gear, how the movement of one gear rotates the other one and these definitions need to make sense in all three space forms.

We fix a space form $X$. We consider that a gear $C$ is a geometric circumference on $X$, together with a division of it in arcs $\{T^{1}, T^{2}, ..., T^{n}\}$ of the same length (written in cyclic order). The arcs $T^{i}$ are called the teeth of $C$. The order of the teeth gives an orientation for $C$.

A rotation movement of a gear is a map $(r,f): \{0,...,s\} \rightarrow Isom(X) \times \mathbb{Z}$ such that $r(0) = id_{X}$, $f(0) = 0$ and for every $t \in \{0,...,s\}$, $r(t)$ is a rotation of the space form $X$ with the center of the gear as its axis of rotation such that for any tooth $T$, $r(t)(T)$ is also a tooth of the gear. The second coordinate says how many full rotations we have in that moment of time.

Let's fix a rotation movement $r$ of a gear $C$ with center $c$. If $p \in X$, then we have a map $r_{p}: \{0,...,s\} \rightarrow X$ given by $r_{p}(t) = r(t)(p)$. Then there is an associated winding map $\sigma_{p}: \{0,...,s\} \rightarrow \mathbb{Z}$ defined by $\sigma_{p}(t) = \frac{\delta(p,r(t)(p))}{a(T)}+nf(t)$, where $a(T)$ is the arc length given by any tooth $T$, $\delta(p,r(t)(p))$ is the oriented angle (with respect to the orientation of $C$) between $\overline{c,p}$ and $\overline{c,r(t)(p)}$ and $n$ is the number of teeth of $C$.

\begin{prop} Let $C$ be a gear with center $p \in C$ and $\sigma: \{0,...,s\} \rightarrow \mathbb{Z}$ a map. Then there exist a unique rotation movement such that its associated winding map with respect to $p$ is $\sigma$.
\end{prop}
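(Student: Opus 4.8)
The plan is to invert the winding map separately at each time $t\in\{0,\dots,s\}$, using two structural facts: that a rotation of $X$ about the center $c$ of $C$ is completely determined by its oriented angle, and that the admissible rotations (those sending teeth to teeth) are exactly those that advance the gear by a whole number of teeth, since the $n$ teeth have equal length and together fill $C$. Because the domain $\{0,\dots,s\}$ is finite and discrete, no continuity requirement is involved, so the construction can be carried out pointwise in $t$.

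Before constructing anything I would record the constraint forced by the definition: as $r(0)=\mathrm{id}_X$ and $f(0)=0$, every winding map satisfies $\sigma_p(0)=\delta(p,p)/a(T)=0$. Thus $\sigma(0)=0$ is necessary for a realizing movement to exist, and the proposition is to be read under this normalization, which is implicit in the definition of $\sigma_p$. Here the hypothesis $p\in C$ guarantees $p\neq c$, so that $\overline{c,p}$ is a genuine radius and the oriented angle $\delta$ is well defined.

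For existence, fix $t$ and perform Euclidean division $\sigma(t)=n f(t)+\rho(t)$ with a unique $f(t)\in\mathbb{Z}$ and $\rho(t)\in\{0,1,\dots,n-1\}$. I would then let $r(t)$ be the rotation about $c$ advancing the gear by $\rho(t)$ teeth, that is, through oriented angle $\rho(t)\cdot\tfrac{2\pi}{n}$; this rotation is admissible because it carries teeth to teeth, and $\sigma(0)=0$ gives $\rho(0)=f(0)=0$, whence $r(0)=\mathrm{id}_X$. Since $\overline{c,p}$ is a radius we have $\delta(p,r(t)(p))/a(T)=\rho(t)$, and substituting into the definition of the winding map yields $\sigma_p(t)=\rho(t)+n f(t)=\sigma(t)$, as required.

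For uniqueness, suppose $(r,f)$ and $(r',f')$ both realize $\sigma$. For each $t$ the quantities $\delta(p,r(t)(p))/a(T)$ and $\delta(p,r'(t)(p))/a(T)$ lie in $\{0,\dots,n-1\}$, while $n f(t)$ and $n f'(t)$ are multiples of $n$, so the uniqueness of the quotient and remainder of the integer $\sigma(t)$ forces $f(t)=f'(t)$ and $\delta(p,r(t)(p))=\delta(p,r'(t)(p))$. The last equality means $r(t)(p)$ and $r'(t)(p)$ subtend the same oriented angle at $c$, and since a rotation about $c$ is determined by its angle, $r(t)=r'(t)$ for all $t$. The one point needing care is the claim that admissible rotations are precisely those advancing a whole number of teeth, which is exactly where the equal-length hypothesis on the teeth enters; everything else is the uniqueness of Euclidean division applied pointwise in $t$, so no analytic input is needed.
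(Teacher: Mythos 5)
Your proof is correct and follows essentially the same route as the paper's: pointwise Euclidean division $\sigma(t)=nf(t)+\rho(t)$, realization of the remainder by the unique tooth-preserving rotation about the center, and uniqueness from the uniqueness of quotient and remainder. Your explicit observation that $\sigma(0)=0$ is forced by the normalization $r(0)=\mathrm{id}_X$, $f(0)=0$ is a small point of care the paper leaves implicit, but it does not change the argument.
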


\begin{proof}We have that, for every $t \in \{0,...,s\}$, we can write $\sigma$ as  $\sigma(t) = g(t) + n f(t)$, for some maps $g: \{0,...,s\} \rightarrow \{0,...,n-1\}$ and $f: \{0,...,s\} \rightarrow \mathbb{Z}$ and this decomposition is uniquely defined (by the Euclidean division algorithm). So we are able to define a rotation movement $(r,f):\{0,...,s\} \rightarrow Isom(X)$, where $r(t)$ is a the unique rotation with center $c$ in the direction of the orientation of $C$ such that $\frac{\delta(p,r(t)(p))}{a(T)} = g(t)$. It follows that $\sigma_{p} = \sigma$. The uniqueness of $(r,f)$ is immediate from the uniqueness of the decomposition of $\sigma$.
\end{proof}

\begin{prop}The associated winding map does not depend on the choice of the point.
\end{prop}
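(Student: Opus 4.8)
The plan is to reduce the independence of the winding map to a single fact about rotations: a rotation about the center $c$ of the gear turns every geodesic ray emanating from $c$ by one and the same angle. Once this is granted, the conclusion is immediate, since in
$$\sigma_p(t) = \frac{\delta(p, r(t)(p))}{a(T)} + n f(t)$$
the arc length $a(T)$, the number of teeth $n$ and the winding count $f(t)$ are data of the gear and of the rotation movement that do not involve $p$; only the term $\delta(p, r(t)(p))$ could a priori depend on the chosen point.

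First I would fix $t \in \{0, \dots, s\}$ and abbreviate $r = r(t)$, a rotation of $X$ fixing $c$. Since $r$ is an isometry fixing $c$, it carries the geodesic ray $\overline{c,p}$ onto the geodesic ray $\overline{c, r(p)}$; equivalently, under the exponential map $\exp_c$ the isometry $r$ corresponds to its differential $dr_c$ acting on the tangent plane $T_c X$, in the sense that $r \circ \exp_c = \exp_c \circ\, dr_c$. Hence $\delta(p, r(p))$, the oriented angle at $c$ from $\overline{c,p}$ to $\overline{c, r(p)}$, equals the oriented angle in $T_c X$ between the initial velocity $v$ of $\overline{c,p}$ and its image $dr_c(v)$.

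The key step is then the observation that $dr_c$ is an orientation-preserving linear isometry of the tangent plane $T_c X$, that is, a planar rotation; therefore the oriented angle between $v$ and $dr_c(v)$ is the same for every nonzero $v \in T_c X$, namely the rotation angle of $dr_c$. Consequently $\delta(p, r(t)(p))$ is determined by $r(t)$ alone and does not depend on $p$, so each term of $\sigma_p(t)$ is independent of $p$. Letting $t$ range over $\{0, \dots, s\}$ gives $\sigma_p = \sigma_q$ for any two admissible points $p, q \neq c$.

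I expect the only delicate point to be the reduction to the tangent plane, namely the verification that the oriented angle $\delta$ measured on $X$ agrees with the oriented angle between initial velocity vectors in $T_c X$, together with the intertwining relation $r \circ \exp_c = \exp_c \circ\, dr_c$. This is precisely where the uniform behaviour of the three geometries enters: in each space form the rotations about a point are exactly the isometries whose differential there is a planar rotation, so the argument goes through without committing to any particular model, in keeping with the model-free spirit of this section.
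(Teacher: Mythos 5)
Your proof is correct and follows essentially the same route as the paper: the paper's entire argument is the one-line assertion that $\delta(p,r(t)(p)) = \delta(q,r(t)(q))$ because $r(t)$ is a rotation about $c$, which is exactly your key fact. You simply supply the justification (via $r \circ \exp_c = \exp_c \circ\, dr_c$ and the fact that $dr_c$ is a planar rotation) that the paper leaves implicit.
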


\begin{proof}Let $p,q \in C$. Since for every $t \in \{0,...,s\}$, $r(t)$ is a rotation with center $c$, then $\delta(p,r(t)(p)) = \delta(q,r(t)(q))$, which implies that $\sigma_{p}(t) = \sigma_{q}(t)$.
\end{proof}

\begin{prop}The associated winding map is invariant by isometries.
\end{prop}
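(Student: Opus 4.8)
The plan is to read the proposition as the statement that transporting the whole configuration by an arbitrary isometry leaves the winding map unchanged. So I would fix $\phi \in Isom(X)$ and first make precise what is being transported: the gear $C$ with center $c$ and teeth $\{T^{1},\dots,T^{n}\}$ is sent to the gear $\phi(C)$ with center $\phi(c)$ and teeth $\{\phi(T^{1}),\dots,\phi(T^{n})\}$ taken in the same cyclic order, and this image order is exactly what fixes the orientation of $\phi(C)$. The rotation movement $(r,f)$ is sent to $(\phi\circ r\circ \phi^{-1},f)$. The goal is then to show that the winding map of this transported movement, based at $\phi(p)$, equals the original $\sigma_{p}$.

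The first step is the routine verification that $(\phi r\phi^{-1},f)$ is a genuine rotation movement of $\phi(C)$. We have $\phi r(0)\phi^{-1}=id_{X}$ and $f(0)=0$; since conjugation by $\phi$ sends a rotation with axis through $c$ to a rotation with axis through $\phi(c)$, each $\phi r(t)\phi^{-1}$ is a rotation of $X$ about $\phi(c)$; and because $r(t)$ permutes the teeth of $C$, the map $\phi r(t)\phi^{-1}$ permutes the teeth $\phi(T^{i})$ of $\phi(C)$. So the transported data is admissible and its winding map is defined.

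The heart of the argument is the computation at the image point. Using the identity $(\phi r(t)\phi^{-1})(\phi(p))=\phi(r(t)(p))$, the definition gives
$$\sigma^{\phi(C)}_{\phi(p)}(t) = \frac{\delta'(\phi(p),\phi(r(t)(p)))}{a(\phi(T))} + n f(t),$$
where $\delta'$ denotes the oriented angle measured against the orientation of $\phi(C)$. I would then invoke two facts: isometries preserve arc length, so $a(\phi(T))=a(T)$; and $\phi$ carries the oriented circle $C$ onto the oriented circle $\phi(C)$, so the oriented angle subtended at the center is preserved, giving $\delta'(\phi(p),\phi(r(t)(p)))=\delta(p,r(t)(p))$. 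Since $f(t)$ and $n$ are unchanged, this yields $\sigma^{\phi(C)}_{\phi(p)}=\sigma^{C}_{p}$, which is the asserted invariance.

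The step I expect to be the main obstacle is precisely the oriented-angle identity, and specifically its behaviour under orientation-reversing isometries. An orientation-reversing $\phi$ flips oriented angles when they are measured against a fixed ambient orientation, so a careless computation would produce a spurious sign. The resolution, which I would state carefully, is that the orientation of $\phi(C)$ is not fixed in advance but is induced by the image $\{\phi(T^{i})\}$ of the teeth; relative to this induced orientation the sign change introduced by $\phi$ is exactly absorbed, so the oriented angle is preserved in all cases. Once this point is pinned down the proof is complete.
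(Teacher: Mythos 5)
Your proof is correct and follows essentially the same route as the paper: conjugate the rotation movement by the isometry, evaluate the winding map at the image point via $\bigl(\phi\circ r(t)\circ\phi^{-1}\bigr)(\phi(p))=\phi(r(t)(p))$, and conclude that the oriented central angle (and hence $\sigma$) is unchanged. You are in fact more careful than the paper on two points it leaves implicit --- that $a(\phi(T))=a(T)$ and that the sign issue for orientation-reversing isometries is absorbed by taking the orientation of $\phi(C)$ to be the one induced by the image teeth --- but the underlying argument is the same.
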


\begin{proof}Let $S$ be an isometry in $X$ and $p \in C$. Then $S \circ r(t) \circ S^{-1}$ is a rotation that fixes the point $S(c)$. We have that $\delta(S(p),S \circ r(t)\circ S^{-1}(S(p))) = \delta(S(p),S \circ r(t)(p)) = \delta(p, r(t)(p))$, which implies that the associated winding map $\sigma_{S(p)}$ of the rotation $(S\circ r(t) \circ S^{-1},f(t))$ of the gear $S(C)$ is equal to $\sigma_{p}$ (the associated winding map of the rotation $(r(t),f(t))$ of the gear $C$).
\end{proof}

We define the angular velocity of $C$ with respect to the movement $(r,f)$ as $\omega(t) = \frac{2\pi\sigma(t)}{n t}$ (where $\sigma$ is the associated winding map).

\begin{prop}The angular velocity is invariant by isometries.
\end{prop}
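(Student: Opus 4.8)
The plan is to observe that the angular velocity $\omega(t) = \frac{2\pi\sigma(t)}{nt}$ is built from only three ingredients: the associated winding map $\sigma(t)$, the number of teeth $n$, and the time parameter $t$. Since the time parameter is intrinsic to the rotation movement and is not altered by applying an isometry to the ambient space form, it suffices to verify that the two remaining ingredients, $\sigma(t)$ and $n$, are preserved when we transport the gear $C$ together with its movement $(r,f)$ by an arbitrary isometry $S$ of $X$.

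First, I would recall that the invariance of the numerator is exactly the content of the preceding proposition: for an isometry $S$ and a point $p \in C$, the associated winding map $\sigma_{S(p)}$ of the conjugated rotation movement $(S \circ r(t) \circ S^{-1}, f(t))$ of the gear $S(C)$ coincides with $\sigma_p$. Hence $2\pi\sigma(t)$ is unchanged under $S$. For the denominator, I would observe that the number of teeth is likewise an isometry invariant: $S$ carries the division $\{T^1, \dots, T^n\}$ of $C$ to the family $\{S(T^1), \dots, S(T^n)\}$ of arcs of $S(C)$, and because $S$ preserves arc length, each $S(T^i)$ has the same length as $T^i$ and these images constitute a valid division of $S(C)$ into teeth of equal length. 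Therefore $S(C)$ has exactly $n$ teeth, so $nt$ is also unchanged.

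Combining these two facts, both the numerator and the denominator of $\omega(t)$ are preserved by $S$, and consequently the angular velocity is invariant by isometries. I do not expect any genuine obstacle here: the only substantive point, the invariance of the winding map, has already been established in the previous proposition, and the invariance of the tooth count is immediate from the length-preserving property of isometries. The argument is thus essentially an assembly of the earlier results, with the invariance of $\sigma$ being the one step that carries all the geometric content.
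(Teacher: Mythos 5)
Your proposal is correct and follows the same route as the paper, which simply declares the result ``immediate from the last proposition'' (the invariance of the winding map); you merely spell out the additional, trivially true observation that the tooth count $n$ is also preserved. No issues.
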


\begin{proof}Immediate from the last proposition.
\end{proof}

Let $C_{1}$ and $C_{2}$ be two gears such that the length of the teeth of both are equal. We say that they are attached if every rotation of one gear is always transferred to a rotation of the other gear in a way that preserves their associated winding maps.

\begin{theorem}\label{teeth} Let $C_{1}$ and $C_{2}$ be two gears attached to each other in the space form $X$. If the two gears have, respectively, $n_{1}$ and $n_{2}$ teeth and their angular velocities are $\omega_{1}$ and $\omega_{2}$, respectively, then we have the following relation:
$$n_{1}\omega_{1}=n_{2}\omega_{2} $$
\end{theorem}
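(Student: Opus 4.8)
The plan is to reduce the statement entirely to the definitions already established, since the angular velocity has been set up precisely so that the tooth-count cancels against the winding map. First I would rewrite the definition of angular velocity in a more convenient form. For a gear $C_i$ with $n_i$ teeth, movement $(r_i,f_i)$ and associated winding map $\sigma_i$, the angular velocity is $\omega_i(t) = \frac{2\pi\sigma_i(t)}{n_i t}$. Multiplying through by $n_i$ this becomes
$$n_i\,\omega_i(t) = \frac{2\pi\sigma_i(t)}{t},$$
so the quantity $n_i\,\omega_i(t)$ depends on the movement only through the winding map $\sigma_i$ and is completely insensitive to the number of teeth.

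The next step is to invoke the hypothesis that $C_1$ and $C_2$ are attached. By definition, being attached means that every rotation of one gear is transferred to the other in a way that preserves their associated winding maps; that is, the two winding maps coincide, $\sigma_1(t) = \sigma_2(t)$ for every $t \in \{0,\dots,s\}$. Here I would note that this equality is well posed precisely because of the earlier propositions: the winding map does not depend on the chosen point and is invariant under isometries, so it is a genuine invariant of each gear's movement rather than an artifact of auxiliary choices. Substituting into the previous display then gives
$$n_1\,\omega_1(t) = \frac{2\pi\sigma_1(t)}{t} = \frac{2\pi\sigma_2(t)}{t} = n_2\,\omega_2(t),$$
which is exactly the desired relation.

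The main point to get right — the place where all the geometric content actually sits — is making explicit why the word "attached" genuinely forces $\sigma_1 = \sigma_2$, rather than this being a hidden restatement of the conclusion. The intuition I would spell out is that $\sigma_i$ counts the signed number of teeth that have passed a fixed reference direction: the term $\delta(p,r_i(t)(p))/a(T)$ counts teeth within a single revolution, while $n_i f_i(t)$ contributes $n_i$ for each completed revolution. Because the two gears have teeth of equal length and mesh tooth against tooth, each time a tooth of $C_1$ advances into the next slot it drives exactly one tooth of $C_2$ along with it; hence after any time $t$ both gears have advanced by the same integer number of teeth, and this integer is precisely $\sigma_1(t)=\sigma_2(t)$. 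Once this identification is accepted as the meaning of "attached," the theorem follows immediately. I would emphasize finally that the argument never refers to the particular space form $X$, which is exactly why this relation holds uniformly in the Euclidean, hyperbolic, and spherical settings.
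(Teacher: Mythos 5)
Your proposal is correct and follows essentially the same route as the paper's own proof: multiply the definition $\omega_i(t)=\frac{2\pi\sigma_i(t)}{n_i t}$ by $n_i$, use the definition of attached gears to get $\sigma_1=\sigma_2$, and conclude. Your additional discussion of why attachment forces the winding maps to coincide is a reasonable elaboration, but in the paper this is simply the definition of ``attached,'' so no extra argument is needed.
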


\begin{proof} Let's suppose that the movements of the gears $C_{1}$ and $C_{2}$ have, respectively, winding maps $\sigma_{1}$ and $\sigma_{2}$. Since these gears are attached to each other, then $\sigma_{1} = \sigma_{2}$. We also have that $\frac{2 \pi \sigma_{i}(t)}{t} = n_{i}\omega_{i}(t)$, which implies that  $n_{1}\omega_{1}(t) =  n_{2}\omega_{2}(t)$.
\end{proof}

Which gives an interpretation of \textbf{Problem \ref{gearprob} a)}

Then we get the following problem:

\

\textbf{Geometric version of Problem \ref{gearprob} b)} Let's consider two attached gears $C_{1}$ and $C_{2}$ with radii $R_{1}$ and $R_{2}$, respectively, that are rotating. What is the relation between the angular velocity of the gears, depending on the radii of both gears?

\

\subsection{Pulleys}

Let's consider two pulleys attached by a tensioned belt. If we rotate one of these pulleys at some angular velocity $\omega_{1}$, then we get some linear velocity $v$ of the points of the boundary of the pulley. Let's consider that there is no sliding in the whole movement. Then, the belt starts to move with the same linear velocity $v$, which forces the second pulley to move with some angular velocity $\omega_{2}$ in such a way that the linear velocity of the points in the boundary of the second pulley are also $v$. Here, we do not care about what is a tensioned belt (but it is still an interesting question) and what is the nature of friction (i.e. why the velocity is transferred from the first pulley to the belt and from the belt to the second pulley).

To be a geometric problem, we need to formalize what we mean by a pulley, rotation of the pulley, linear velocity of a point and angular velocity and these definitions need to make sense in all three space forms.

Let's fix a space form $X$. By a pulley we mean a geometric circle inside $X$. A rotation movement of the pulley is a differentiable map $r: [0,s] \rightarrow Isom(X)$ such that $r(0) = id_{X}$ and for every $t \in[0,s]$, $r(t)$ is a rotation of the space form $X$ with the center of the pulley as its axis of rotation. Since the pulley is a circle and for every $t \in [0,s]$, $r(t)$ is a rotation whose center is the center of the pulley, then we have that the pulley is an invariant set of the "action" of the rotation movement (i.e., if $C$ is the pulley, then for every $t \in[0,s]$, $r(t)(C) = C$).

Let's fix a rotation movement $r$ of a pulley. If $p \in X$, then we have a differentiable map $r_{p}: [0,s] \rightarrow X$ given by $r_{p}(t) = r(t)(p)$.

The linear velocity of the point $p$ and the angular velocity of the pulley with respect to the center of the pulley are defined below.

Let $X$ be a space form, $\gamma: [0,s] \rightarrow X$ a differentiable path, $p \in X$ and $\lambda$ geodesic segment that starts at $p$.

\begin{definition}The linear velocity of $\gamma$ is defined by $\gamma'$. Its norm at the time $t$ is defined by $||\gamma'(t)||_{\gamma(t)}$.
\end{definition}

\begin{obs}\label{linearvelocityinvariant} Note that, by the definition of isometry, we have that the norm of the linear velocity is invariant by isometries.
\end{obs}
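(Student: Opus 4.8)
The plan is to reduce the claim to the single defining property of a Riemannian isometry: its differential preserves the metric tensor at every point. Making the statement precise, I take an isometry $S \in Isom(X)$ and a differentiable path $\gamma \colon [0,s] \to X$, and I aim to show that transporting $\gamma$ by $S$ does not change its speed, i.e.\ that $\|(S\circ\gamma)'(t)\|_{S(\gamma(t))} = \|\gamma'(t)\|_{\gamma(t)}$ for every $t$. This is precisely the sense in which ``the norm of the linear velocity is invariant by isometries''.

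First I would record the definition of an isometry as a map of Riemannian manifolds: $S$ is a diffeomorphism of $X$ whose differential $dS_p \colon T_pX \to T_{S(p)}X$ satisfies $\langle dS_p(u), dS_p(v)\rangle_{S(p)} = \langle u, v\rangle_p$ for every $p \in X$ and all $u, v \in T_pX$. Because this property holds verbatim in each of the three space forms, no case distinction on the geometry is needed. Next I would apply the chain rule to the composition: since $(S\circ\gamma)(t) = S(\gamma(t))$, we have $(S\circ\gamma)'(t) = dS_{\gamma(t)}\bigl(\gamma'(t)\bigr)$, so the velocity vector of the transported path is exactly the image of $\gamma'(t)$ under the differential of $S$ at the base point $\gamma(t)$.

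Combining these two facts gives the result directly, by computing the squared norm:
\begin{align*}
\|(S\circ\gamma)'(t)\|_{S(\gamma(t))}^2 &= \langle dS_{\gamma(t)}(\gamma'(t)),\, dS_{\gamma(t)}(\gamma'(t))\rangle_{S(\gamma(t))}\\
&= \langle \gamma'(t),\, \gamma'(t)\rangle_{\gamma(t)} = \|\gamma'(t)\|_{\gamma(t)}^2,
\end{align*}
and taking square roots. There is no substantial obstacle here; the content of the remark is entirely carried by the metric-preservation identity. The only point demanding care is bookkeeping: one must apply that identity at the base point $\gamma(t)$ rather than at $S(\gamma(t))$, and keep track of the tangent space in which each norm is measured --- information that the subscripts on $\|\cdot\|$ are there to record.
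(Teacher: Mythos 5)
Your argument is correct and is exactly the justification the paper has in mind: the remark is stated without proof as an immediate consequence of the definition of isometry, and your chain-rule computation $\|(S\circ\gamma)'(t)\|_{S(\gamma(t))} = \|dS_{\gamma(t)}(\gamma'(t))\|_{S(\gamma(t))} = \|\gamma'(t)\|_{\gamma(t)}$ simply makes that one-line appeal explicit. No discrepancy with the paper's approach.
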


\begin{definition} Let $p \in X$ and $\lambda$ a geodesic segment that starts on $p$. Consider the map $\alpha_{p,\lambda}(\gamma): [0,s] \rightarrow \mathbb{R}$  where $\alpha_{p,\lambda}(\gamma)(t)$ is the angle between $\lambda$ and the geodesic segment $\overline{p,\gamma(t)}$ (we use the notation $\measuredangle (\lambda, \overline{p,\gamma(t)})$). We call the angular velocity of $\gamma$ with respect to the point $p$ as $\omega_{p}(\gamma)(t)$ defined by $|\alpha'_{p,\lambda}(\gamma)(t)|$.
\end{definition}

\begin{obs}Note that if $X = S^{2}$, then the map $\alpha_{p,\lambda}(\gamma)$ is not well defined if $\gamma$ passes through the antipodal point of $p$. This does not happen on the curves that we need to consider in this paper.
\end{obs}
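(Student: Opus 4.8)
The plan is to justify the two assertions of the remark in turn: first that $\alpha_{p,\lambda}(\gamma)$ genuinely fails to be well-defined when $\gamma$ reaches the antipode of $p$, and then that the orbits we actually use never reach that antipode. For the first assertion I would argue directly from the definition. The value $\alpha_{p,\lambda}(\gamma)(t)$ is the angle at $p$ between $\lambda$ and the geodesic segment $\overline{p,\gamma(t)}$, so it depends on the initial tangent direction at $p$ of that segment. On $S^{2}$ any two non-antipodal points are joined by a unique minimizing geodesic arc, of length strictly less than $\pi$, giving a single well-defined direction at $p$; but the antipode $-p$ is joined to $p$ by every great-circle semicircle through the two points, and as one ranges over these semicircles the initial direction at $p$ sweeps out the entire unit tangent circle at $p$. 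Hence when $\gamma(t)=-p$ the segment $\overline{p,\gamma(t)}$ is not unique, the angle $\measuredangle(\lambda,\overline{p,\gamma(t)})$ admits every value in $[0,2\pi)$, and there is no canonical choice, so $\alpha_{p,\lambda}(\gamma)$ is undefined at such $t$.

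For the second assertion I would recall that in every application the reference point $p$ is the center $c$ of the relevant circle and $\gamma$ is the orbit $r_{q}$ of a boundary point $q$ under the rotation movement. Since each $r(t)$ is a rotation about $c$, the orbit $r_{q}$ is contained in the geodesic circle of radius $R$ centered at $c$, so $d_{S^{2}}(c,\gamma(t))=R$ for all $t$. A nondegenerate geometric circle on the unit sphere has radius $0<R<\pi$, since at $R=\pi$ the circle collapses to the single point $-c$, and the antipode $-c$ is the unique point lying at distance exactly $\pi$ from $c$. Because $R\neq\pi$, I conclude that $\gamma(t)\neq -c=-p$ for every $t$, so the exceptional situation of the first part never arises and $\alpha_{p,\lambda}(\gamma)$ is well-defined throughout the motion.

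The argument is essentially definitional, so I do not anticipate a serious obstacle; the only point requiring care is the bookkeeping about which point plays the role of $p$. One must confirm that the reference point for every angular-velocity computation in the paper is indeed the center of the circle, so that the distance from $p$ to the moving point is the radius $R<\pi$, rather than some other configuration in which a moving point could sit at distance $\pi$ from the reference. Once this is verified, the constant-distance observation $d_{S^{2}}(c,\gamma(t))=R\neq\pi$ closes the argument.
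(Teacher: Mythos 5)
Your proposal is correct and matches the reasoning the paper leaves implicit: the remark is asserted without proof, and its justification is exactly your two observations, namely that $p$ and its antipode are joined by infinitely many geodesics (so the angle $\measuredangle(\lambda,\overline{p,\gamma(t)})$ has no canonical value there), while every curve used in the paper is an orbit of a rotation about the reference point $p=c$, hence stays on a circle of radius $0<R<\pi$ and never reaches the antipode --- consistent with Proposition \ref{angularvelocitywelldefined}, which explicitly excludes the antipodal point. The only cosmetic slip is that the unsigned angle takes values in $[0,\pi]$ rather than $[0,2\pi)$, which does not affect the argument.
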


\begin{prop}The angular velocity does not depend on the choice of $\lambda$.
\end{prop}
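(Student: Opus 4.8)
The plan is to reduce the entire computation to the tangent plane $T_pX$ and to exhibit a single differentiable quantity, independent of $\lambda$, whose absolute derivative is the angular velocity. First I would use the standard fact that the angle at $p$ between two geodesic segments issuing from $p$ equals the angle between their initial unit tangent vectors in $T_pX$. Fix an orientation and a reference direction on the two-dimensional plane $T_pX$, so that every unit vector in $T_pX$ acquires an angular coordinate in $\mathbb{R}/2\pi\mathbb{Z}$. Let $v(t)$ denote the initial unit tangent vector of the geodesic segment $\overline{p,\gamma(t)}$. Since $\gamma$ is differentiable and the segment $\overline{p,\gamma(t)}$ is well defined throughout (so $\gamma(t)\neq p$, and in the spherical case $\gamma(t)$ is never the antipode of $p$, as guaranteed by the preceding remark), the vector $v(t)$ is a nowhere-vanishing differentiable field along $\gamma$, and hence admits a differentiable angular coordinate $\beta\colon[0,s]\to\mathbb{R}$, unique up to the additive constant coming from the choice of reference direction.

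Next I would write $\alpha_{p,\lambda}(\gamma)$ explicitly in terms of $\beta$. If $\theta_\lambda\in\mathbb{R}/2\pi\mathbb{Z}$ is the angular coordinate of the initial direction of $\lambda$, then the unsigned angle between $\lambda$ and $\overline{p,\gamma(t)}$ is
$$\alpha_{p,\lambda}(\gamma)(t)=D\bigl(\beta(t)-\theta_\lambda\bigr),$$
where $D\colon\mathbb{R}/2\pi\mathbb{Z}\to[0,\pi]$ is the circular distance to $0$, that is, $D(\psi)=|\psi|$ for the representative $\psi\in(-\pi,\pi]$. Away from the values at which $D$ fails to be differentiable, namely $\psi\equiv 0$ and $\psi\equiv\pi$ (which correspond exactly to the instants where $\gamma(t)$ lies on the full geodesic through $p$ carrying $\lambda$), we have $D'=\pm 1$, so by the chain rule $\alpha'_{p,\lambda}(\gamma)(t)=\pm\,\beta'(t)$ and therefore
$$\omega_{p}(\gamma)(t)=\bigl|\alpha'_{p,\lambda}(\gamma)(t)\bigr|=\bigl|\beta'(t)\bigr|.$$
The right-hand side makes no reference to $\lambda$, which is precisely the assertion.

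The main obstacle is the exceptional set where the unsigned angle equals $0$ or $\pi$, at which $D$ has a corner and $\alpha_{p,\lambda}(\gamma)$ need not be differentiable. I would dispose of this either by remarking that at such an instant both one-sided derivatives of $\alpha_{p,\lambda}(\gamma)$ have absolute value $|\beta'(t)|$, so the angular velocity, defined as an absolute value, is still well defined and again equals $|\beta'(t)|$; or, since the curves $\gamma$ we ultimately feed into $\omega_p$ are the smooth rotation paths arising from pulleys, by simply excluding the isolated instants where $\beta(t)-\theta_\lambda$ crosses $0$ or $\pi$. In either case the conclusion is identical: the angular velocity coincides with $|\beta'(t)|$, a quantity intrinsic to $p$ and $\gamma$ alone and independent of the auxiliary geodesic $\lambda$.
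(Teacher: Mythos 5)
Your proof is correct, and it follows the same underlying idea as the paper's one\textendash line argument: changing $\lambda$ only shifts the angle function by a constant, so the absolute value of the derivative is unaffected. The difference is in rigor. The paper simply asserts that $\alpha_{p,\lambda_{1}}(\gamma)-\alpha_{p,\lambda_{2}}(\gamma)$ is constant, equal to the angle between $\lambda_{1}$ and $\lambda_{2}$; taken literally for the \emph{unsigned} angle valued in $[0,\pi]$ this is false (for instance, with rays at angular coordinates $0$ and $\pi/2$ and the moving ray at $\theta\in(0,\pi/2)$, the difference is $2\theta-\pi/2$), and it only becomes true after passing to signed angles modulo $2\pi$. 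Your argument supplies exactly the missing bookkeeping: you lift the direction of $\overline{p,\gamma(t)}$ to a differentiable angular coordinate $\beta(t)$, write $\alpha_{p,\lambda}(\gamma)(t)=D\bigl(\beta(t)-\theta_{\lambda}\bigr)$ with $D$ the circular distance to $0$, and observe that wherever $D$ is differentiable the chain rule gives $\alpha_{p,\lambda}'=\pm\beta'$, so $\omega_{p}(\gamma)=|\beta'|$ independently of $\lambda$. You also correctly isolate and dispose of the corner set where the angle equals $0$ or $\pi$, where $\alpha_{p,\lambda}(\gamma)$ need not be differentiable, by checking that both one-sided derivatives still have absolute value $|\beta'(t)|$. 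In short, your proof buys a repair of the paper's imprecision at the cost of introducing the auxiliary lift $\beta$, while the paper's version is shorter but strictly speaking only valid for oriented angles.
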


\begin{proof}Let $p$ be a point in $X$ and $\lambda_{1}$, $\lambda_{2}$ two segments that start from $p$. Then $\alpha_{p,\lambda_{1}}(\gamma) - \alpha_{p,\lambda_{2}}(\gamma)$ is constant equal to the angle  between $\lambda_{1}$ and $\lambda_{2}$. Then  $\alpha'_{p,\lambda_{1}}(\gamma) =  \alpha'_{p,\lambda_{2}}(\gamma)$, which implies that  $|\alpha'_{p,\lambda_{1}}(\gamma)| =  |\alpha'_{p,\lambda_{2}}(\gamma)|$.
\end{proof}

\begin{prop}\label{angularvelocityinvariant}The angular velocity is invariant by isometries of $X$.
\end{prop}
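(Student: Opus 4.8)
The plan is to exploit the two defining features of an isometry: it sends each geodesic segment to a geodesic segment (preserving its endpoints), and it preserves the angle between two geodesic segments emanating from a common point. Fix an isometry $S$ of $X$, a point $p$, a differentiable path $\gamma$, and a reference geodesic segment $\lambda$ starting at $p$. First I would observe that $S(\lambda)$ is again a geodesic segment, now starting at $S(p)$, and that for each $t$ we have $\overline{S(p),(S\circ\gamma)(t)} = S\bigl(\overline{p,\gamma(t)}\bigr)$, since $S$ carries the geodesic segment joining $p$ to $\gamma(t)$ onto the geodesic segment joining their images.

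Next, because $S$ preserves angles between geodesic segments based at a common point, I would conclude that for every $t$,
$$\alpha_{S(p),S(\lambda)}(S\circ\gamma)(t) = \measuredangle\bigl(S(\lambda),\,S(\overline{p,\gamma(t)})\bigr) = \measuredangle\bigl(\lambda,\,\overline{p,\gamma(t)}\bigr) = \alpha_{p,\lambda}(\gamma)(t).$$
Since these two real-valued functions of $t$ coincide, their derivatives coincide, and hence so do their absolute values, giving $\omega_{S(p)}(S\circ\gamma)(t) = \omega_{p}(\gamma)(t)$ when the angular velocity of the transformed curve $S\circ\gamma$ is measured against the transported reference $S(\lambda)$. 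Finally, I would invoke the preceding proposition (independence of the angular velocity from the choice of reference segment) to discard any dependence on which reference is used for $S\circ\gamma$, so that the equality becomes a genuine identity of angular velocities.

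I do not anticipate a substantive obstacle here, since the entire content is that isometries preserve both geodesics and angles, which makes the angle functions $\alpha$ literally equal before and after applying $S$. The only point requiring care is bookkeeping: one must transport the reference segment $\lambda$ to $S(\lambda)$ and then appeal to independence of the reference, rather than trying to compare angles measured against two unrelated segments.
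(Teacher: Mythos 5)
Your proof is correct and follows essentially the same route as the paper's: both rest on the fact that an isometry carries $\lambda$ and $\overline{p,\gamma(t)}$ to geodesic segments based at $S(p)$ while preserving the angle between them, so the angle functions $\alpha$ coincide and hence so do their derivatives. Your explicit final appeal to the independence of the reference segment is a small bookkeeping point the paper leaves implicit, but it is the same argument.
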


\begin{proof}Let $p \in X$, $\lambda$ a segment that starts at $p$ and $T$ an isometry of $X$. Then $\alpha_{p,\lambda}(\gamma)(t)$ is the angle between $\lambda$ and $\overline{p,\gamma(t)}$, which is the same angle as the angle between $T(\lambda)$ and $\overline{T(p),T \circ \gamma(t)}$ (which is $\alpha_{T(p),T(\lambda)}(T \circ \gamma)(t)$. Then  $\omega_{p}(\gamma) =  \omega_{T(p)}(T \circ \gamma)$.
\end{proof}

\begin{prop}\label{angularvelocitywelldefined}Let $m:[0,s] \rightarrow Isom(X)$ be a rotation of $X$ with center $c$ and $p,q \in X-\{c\}$ ($p,q$ also different than the antipodal point of $c$, if $X = \mathbb{S}^{2}$). Then $\omega_{c}(m_{p}) = \omega_{c}(m_{q})$.
\end{prop}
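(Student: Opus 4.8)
The plan is to reduce everything to the single rotation angle of $m(t)$ at the fixed center $c$. Since the angular velocity does not depend on the choice of reference segment (by the proposition showing that $\omega_{c}$ is independent of $\lambda$), I fix once and for all a geodesic segment $\lambda$ starting at $c$. Because $m(t)$ is a rotation of $X$ with center $c$, it is an orientation-preserving isometry fixing $c$, so its differential $dm(t)_{c}$ is an orientation-preserving linear isometry of the tangent plane $T_{c}X \cong \mathbb{R}^{2}$, i.e. a planar rotation by a well-defined angle $\theta(t)$, with $\theta(0)=0$ and $\theta$ differentiable in $t$ (it is read off from the differentiable family $m(t)$). The decisive feature of such a rotation is that it turns every direction at $c$ by the same angle $\theta(t)$.

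Next I would record the geometric fact that translates this into a statement about the angle maps $\alpha_{c,\lambda}$. For $x \in \{p,q\}$ the isometry $m(t)$ sends geodesics to geodesics and fixes $c$, so $\overline{c,m(t)(x)} = m(t)(\overline{c,x})$; in particular the initial direction at $c$ of $\overline{c,m(t)(x)}$ is the image under $dm(t)_{c}$ of the initial direction of $\overline{c,x}$, hence its $\theta(t)$-rotation. Using the same signed-angle convention as in the proof that $\omega_{c}$ is independent of $\lambda$, this gives
$$\alpha_{c,\lambda}(m_{x})(t) = \measuredangle(\lambda,\overline{c,m(t)(x)}) = \measuredangle(\lambda,\overline{c,x}) + \theta(t),$$
where $\measuredangle(\lambda,\overline{c,x})$ is a constant depending only on $x$.

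It then follows by differentiating in $t$ that $\alpha'_{c,\lambda}(m_{x})(t) = \theta'(t)$, independently of the point $x$. Taking $x=p$ and $x=q$ and passing to absolute values yields $\omega_{c}(m_{p})(t) = |\theta'(t)| = \omega_{c}(m_{q})(t)$, as claimed. The step I expect to require the most care is the geometric fact used above, namely that a rotation centered at $c$ acts on the circle of directions at $c$ as a single rigid rotation by $\theta(t)$: this is precisely where the fixed-point structure at $c$ enters, reducing the claim to the planar statement about $dm(t)_{c}$ through the radial-isometry property of the exponential map at $c$, which identifies directions at $c$ with geodesic rays from $c$. One should also make the orientation convention on $\measuredangle$ explicit, so that the additive formula in the display is literally valid rather than valid only up to the $[0,\pi]$ folding, exactly as is already tacitly assumed in the preceding proposition.
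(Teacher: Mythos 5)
Your argument is correct, but it follows a genuinely different route from the paper's. The paper proves the proposition by a three-step case analysis: for $p,q$ equidistant from $c$ it picks an isometry $T$ fixing $c$ with $T(p)=q$, writes $m_{q}=T\circ m_{p}$ (which implicitly uses that $T$, chosen as a rotation about $c$, commutes with $m(t)$), and invokes the isometry-invariance of $\omega$ (Proposition \ref{angularvelocityinvariant}); for $p,q,c$ collinear it observes the angles $\measuredangle(\lambda,\overline{c,m_{p}(t)})$ and $\measuredangle(\lambda,\overline{c,m_{q}(t)})$ coincide outright; and the general case is obtained by inserting an intermediate point $p'$ collinear with $p$ and $c$ at distance $d(q,c)$. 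You instead work infinitesimally at the fixed point: $dm(t)_{c}$ is a planar rotation by a differentiable angle $\theta(t)$, radial geodesics from $c$ correspond to directions in $T_{c}X$ via the exponential map, so $\alpha_{c,\lambda}(m_{x})(t)=\measuredangle(\lambda,\overline{c,x})+\theta(t)$ for every admissible $x$, and differentiation kills the constant. Your approach is more uniform and yields the stronger, more informative conclusion $\omega_{c}(m_{x})=|\theta'|$, identifying the angular velocity with the intrinsic rotation speed of the isometry family; its cost is the extra Riemannian input (differential at a fixed point, the radial-isometry property of $\exp_{c}$) and the signed-angle convention you rightly flag, whereas the paper's argument stays synthetic and leans only on the invariance propositions already established, at the price of the case split and the tacit commutation step. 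Both proofs are valid within the paper's framework.
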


\begin{proof}Suppose that $p$ and $q$ have the same distance to the point $c$. Then there exists an isometry $T$ that fixes the point $c$ and such that $T(p) = q$. Then we have that $m_{q} = T \circ m_{p}$. By the last proposition, it follows that $\omega_{c}(m_{p}) = \omega_{c}(m_{q})$.

Suppose that $p$, $q$ and $c$ are collinear. Then $\measuredangle (\lambda, \overline{c,m_{p}(t)}) = \measuredangle (\lambda, \overline{c,m_{q}(t)})$, which implies that $\alpha_{c,\lambda}(m_{p}) = \alpha_{c,\lambda}(m_{q})$ and then $\omega_{c}(m_{p}) = \omega_{c}(m_{q})$.

Let's consider the general case of $p$ and $q$. Then there exists $p'$ such that $p, p'$ and c are collinear and the distance between $p'$ and $c$ is the same as the distance between $q$ and $c$. So $\omega_{c}(m_{p}) = \omega_{c}(m_{p'})$ and  $\omega_{c}(m_{p'}) = \omega_{c}(m_{q})$, which implies that  $\omega_{c}(m_{p}) = \omega_{c}(m_{q})$.
\end{proof}

So if $C$ is a pulley with center $c$ and $m$ is a rotation of $C$, then $\omega_{c}(m_{p})$ doesn't depend on the choice of the point $p$. So we can call it the angular velocity of $C$, when rotated by $m$.

Then we get the following problem:

\

\textbf{Geometric version of Problem \ref{pulleyprob}} Let's consider two pulleys $C_{1}$ and $C_{2}$ with radii $R_{1}$ and $R_{2}$, respectively. If both are rotating with the same linear velocity, then what is the relation between the angular velocity of the pulleys, depending on the radii of both pulleys?

\section{The hyperbolic plane}

\subsection{The Poincaré-disk model}

Let $\mathbb{D}^2=\lbrace z\in \mathbb{C}; |z|<1\rbrace$ be the unitary disc, and $\displaystyle ds^2=\frac{4|dz|^2}{(1-|z|^2)^2}$ be its Riemannian metric. By considering this convention, we recall that this space is known as the Poincaré-disk.  Then, if $p \in \mathbb{H}^{2}$ and $u,v \in T_{p}\mathbb{H}^{2}$ (we identify it with $\mathbb{R}^{2}$), the inner product is given by $\langle u,v\rangle_{p} = \frac{4\langle u,v\rangle}{(1-||p||^{2})^{2}}$, where $\langle,\rangle$ is the regular inner product of $\mathbb{R}^{2}$ and $||\cdot||$ its norm. We denote the norm of $T_{p}\mathbb{H}^{2}$ by $||\cdot||_{p}$.

If $C$ is a circle with center $0$, hyperbolic radius $R$ and Euclidean radius $r$, then we have the relations  $\sinh(\frac{R}{2}) = \frac{r}{(1-r^{2})^{1/2}}$ and $\tanh(\frac{R}{2}) = r$ (this relations are in the proof of Theorem 7.2.2 of \cite{Bea}). The length of this circle is $2\pi \sinh{R}$ (Theorem 7.2.2 (ii) of \cite{Bea}).

\subsection{Gears at the Poincaré-disk}

Let $C_{1}$ and $C_{2}$ be two gears attached to each other in the hyperbolic plane. Suppose that the two gears have, respectively, $n_{1}$ and $n_{2}$ teeth and hyperbolic radii $R_{1}$ and $R_{2}$, respectively. The movement of the gears' system depends on the width of the teeth, so assume that $a_{1}(T) = a_{2}(T)$. Notice that a circle $C_{i}$ at the hyperbolic plane with hyperbolic radius $R_{i}$ has length $2\pi\sinh(R_{i})$.

    Since $n_{i}a_{i}(T)= 2\pi\sinh(R_{i})$, then

    \begin{align*}
     \frac{2\pi\sinh(R_{1})}{n_{1}} = a_{1}(T) = a_{2}(T) = \frac{2\pi\sinh(R_{2})}{n_{2}}
    \end{align*}
    so $$\sinh(R_{1})n_{2}=\sinh(R_{2})n_{1}.$$

\begin{theorem}\label{hyperbolicgear} Let $C_{1}$ and $C_{2}$ be two gears attached to each other in the hyperbolic plane. If the two gears have radii $R_{1}$ and $R_{2}$ and angular velocities are $\omega_{1}$ and $\omega_{2}$, respectively, then we have the following relation:
$$\sinh(R_{1}) \omega_{1}(t) =  \sinh(R_{2})\omega_{2}(t)$$
\end{theorem}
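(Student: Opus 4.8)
The plan is to combine Theorem~\ref{teeth} with the length--radius relation established immediately before the statement. Both ingredients are already in hand, so the argument reduces to an algebraic substitution; there is no genuinely hard step here.

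First I would invoke Theorem~\ref{teeth}, which holds in any space form and hence in the hyperbolic plane, to obtain $n_{1}\omega_{1}(t) = n_{2}\omega_{2}(t)$. This relation does not yet involve the geometry: it merely records that two attached gears transfer their winding maps faithfully, so that their angular velocities are inversely proportional to their tooth counts. Next I would bring in the identity $\sinh(R_{1})\,n_{2} = \sinh(R_{2})\,n_{1}$, derived just above from the fact that a hyperbolic circle of radius $R_{i}$ has length $2\pi\sinh(R_{i})$ together with the equal-tooth-width hypothesis $a_{1}(T)=a_{2}(T)$, which forces $n_{i}a_{i}(T)=2\pi\sinh(R_{i})$.

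The main step is then to eliminate the tooth counts between these two relations. Rewriting the length--radius identity as $n_{1} = \frac{\sinh(R_{1})}{\sinh(R_{2})}\,n_{2}$ and substituting into $n_{1}\omega_{1}(t)=n_{2}\omega_{2}(t)$ gives $\frac{\sinh(R_{1})}{\sinh(R_{2})}\,n_{2}\,\omega_{1}(t) = n_{2}\,\omega_{2}(t)$, and cancelling the common factor $n_{2}$ yields $\sinh(R_{1})\omega_{1}(t) = \sinh(R_{2})\omega_{2}(t)$, which is exactly the claimed relation.

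The only point worth flagging is not an obstacle but a dependency: the length--radius identity silently rests on the standing assumption, built into the definition of attached gears, that the two gears share a common tooth width, and on the hyperbolic circumference formula $2\pi\sinh(R)$ recalled in the Poincaré-disk section. Once those are granted, the conclusion follows immediately, and I expect the written proof to be only one or two lines.
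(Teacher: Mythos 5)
Your argument is correct and is essentially the paper's own proof: the paper likewise combines Theorem~\ref{teeth} with the relation $\sinh(R_{1})n_{2}=\sinh(R_{2})n_{1}$ (equivalently $\tfrac{\sinh R_{1}}{\sinh R_{2}}=\tfrac{n_{1}}{n_{2}}$) obtained from the hyperbolic circumference formula and the equal-tooth-width assumption, then eliminates the tooth counts. Your flagged dependency on $a_{1}(T)=a_{2}(T)$ matches the standing assumption made in the paper's Section 2.2.
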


\begin{proof}We have that $\frac{\sinh{R_{1}}}{\sinh{R_{2}}} = \frac{n_{1}}{n_{2}}$ and $\frac{n_{1}}{n_{2}} = \frac{\omega_{2}(t)}{\omega_{1}(t)}$ (by \textbf{Theorem \ref{teeth}}), which implies that $\frac{\sinh{R_{1}}}{\sinh{R_{2}}} = \frac{\omega_{2}(t)}{\omega_{1}(t)}$ and then $\sinh(R_{1}) \omega_{1}(t) =  \sinh(R_{2})\omega_{2}(t)$.
\end{proof}

\subsection{Pulleys at the Poincaré-disk}

Let $\alpha: [0,s] \rightarrow \mathbb{R}$ be a differentiable map that satisfies $\alpha(0) = 0$ and let $\gamma_{r,\alpha}: [0,s] \rightarrow \mathbb{H}^{2}$ be the differentiable curve defined by $\gamma_{r,\alpha}(t) =$ $r(cos(\alpha(t)),sin(\alpha((t))))$. It describes the movement over time of the point $p = (r,0)$ inside the circle of center $0$, euclidean radius $r$ and hyperbolic radius $R$ such that  $\sinh^{2}(\frac{R}{2}) = \frac{r^{2}}{1-r^{2}}$. The linear velocity of $p$ is given by $\gamma_{r,\alpha}'(t) = r\alpha'(t)(-sin(\alpha(t)),cos(\alpha(t)))$.

If we take $\lambda$ as the geodesic segment $\overline{0,(r,0)}$, then we have that $\alpha_{0,\lambda}(\gamma_{r,\alpha}) = \alpha$. This implies that the angular velocity of $\gamma_{r,\alpha}$ is given by $\omega_{0,\lambda}(\gamma_{r,\alpha}) = |\alpha'|$.

Let $C_{1}$ and $C_{2}$ be two gears or two pulleys in $\mathbb{H}^{2}$ and with hyperbolic radii $R_{1}$ and $R_{2}$, respectively. Suppose that both are rotating with the same norm for their linear velocities. Since the linear velocity and the angular velocity are invariant by isometries, we can suppose that $C_{1}$ and $C_{2}$ are centered in $0$. The euclidean radii of $C_{1}$ and $C_{2}$ are, respectively, given by the relation $\sinh^{2}(\frac{R_{i}}{2}) = \frac{r_{i}}{1-r_{i}^{2}}$. Then the point $(r_{i},0)$ is in $C_{i}$ and its movement is described by the curve $\gamma_{i} = \gamma_{r_{i},\alpha_{i}}$, for some map $\alpha_{i}$.

The norms of the linear velocities of $C_{1}$ and $C_{2}$ are the same, i.e. for every $t \in [0,s]$:
$$||\gamma_{r_{1},\alpha_{1}}'(t)||_{\gamma_{r_{1},\alpha_{1}}(t)} = ||\gamma_{r_{2},\alpha_{2}}'(t)||_{\gamma_{r_{2},\alpha_{2}}(t)}$$

Then we have:

$$\frac{4\langle \gamma_{r_{1},\alpha_{1}}'(t), \gamma_{r_{1},\alpha_{1}}'(t) \rangle}{(1-||\gamma_{r_{1},\alpha_{1}}(t)||^{2})^{2}} =  \frac{4\langle \gamma_{r_{2},\alpha_{2}}'(t), \gamma_{r_{2},\alpha_{2}}'(t) \rangle}{(1-||\gamma_{r_{2},\alpha_{2}}(t)||^{2})^{2}}$$
$$\frac{r^{2}_{1}\alpha_{1}'(t)^{2}}{(1-r_{1}^{2})^{2}} =  \frac{r^{2}_{2}\alpha_{2}'(t)^{2}}{(1-r_{2}^{2})^{2}} $$

Since $\sinh^{2}(\frac{R_{i}}{2}) = \frac{r_{i}^{2}}{1-r_{i}^{2}}$ and $\tanh^{2}\left(\frac{R_{i}}{2}\right)=r_{i}^{2}$, then:
$$\frac{r_{i}^{2}}{(1-r_{i}^{2})^{2}}=\frac{\sinh^{2}\left( \frac{R_{i}}{2}\right)}{1-\tanh^{2}\left(\frac{R_{i}}{2}\right)}=\frac{\sinh^{2}\left(\frac{R_{i}}{2} \right)}{\text{sech}^{2}\left(\frac{R_{i}}{2}\right)}=\left(\sinh\left(\frac{R_{i}}{2}\right)\cosh\left(\frac{R_{i}}{2}\right)\right)^{2} = (\frac{1}{2}\sinh(R_{i}))^{2};$$
so
$$\frac{1}{2}\sinh(R_{1})|\alpha_{1}'(t)| =  \frac{1}{2}\sinh(R_{2})|\alpha_{2}'(t)|,$$

since $\sinh(R_{i})$ is positive.

But $|\alpha_{i}'(t)| = \omega_{0}(\gamma_{i})(t)$. Then we have:
$$\sinh(R_{1}) \omega_{0}(\gamma_{1})(t) =  \sinh(R_{2})\omega_{0}(\gamma_{2})(t)$$

Since the angular velocity does not depend on the choice of the initial point (\textbf{Proposition \ref{angularvelocitywelldefined}}), then we have the theorem:

\begin{theorem}\label{hyperbolic} Let $C_{1}$ and $C_{2}$ be two pulleys attached by a tensioned belt in the hyperbolic plane. If $C_{i}$ has hyperbolic radius $R_{i}$ and angular velocity  $\omega_{i}$, then we have the following relation:
$$\sinh(R_{1}) \omega_{1}(t) =  \sinh(R_{2})\omega_{2}(t)$$ \eod
\end{theorem}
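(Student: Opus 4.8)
The plan is to reduce the statement to an explicit computation inside the Poincaré-disk model, leveraging the isometry invariance established earlier. Since both the norm of the linear velocity (\textbf{Remark \ref{linearvelocityinvariant}}) and the angular velocity (\textbf{Proposition \ref{angularvelocityinvariant}}) are preserved by isometries of $\mathbb{H}^2$, I would first apply an isometry that carries the configuration to one in which both pulleys $C_1$ and $C_2$ are centered at the origin $0$. This is the step that makes everything computable, because a rotation about $0$ acts on the disk as an ordinary Euclidean rotation, so each boundary point travels on a Euclidean circle with a controllable angular parametrization.

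Next, for each pulley $C_i$ of hyperbolic radius $R_i$ and Euclidean radius $r_i$, I would track a single boundary point through the curve $\gamma_{r_i,\alpha_i}(t) = r_i(\cos\alpha_i(t),\sin\alpha_i(t))$, where $\alpha_i$ records the angular position. Differentiating and inserting into the Poincaré inner product gives the squared linear-velocity norm $\tfrac{4 r_i^2 \alpha_i'(t)^2}{(1-r_i^2)^2}$. The physical hypothesis that the belt is tensioned and does not slip translates precisely into the demand that these two norms agree for every $t$, which is the single equation I would write down to drive the argument.

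The crux is an algebraic simplification of the metric factor: using the model relations $\sinh^2(R_i/2) = r_i^2/(1-r_i^2)$ and $\tanh^2(R_i/2) = r_i^2$ recalled in \textbf{Section 2.1}, I would rewrite $\tfrac{r_i^2}{(1-r_i^2)^2}$ as $\bigl(\tfrac12\sinh R_i\bigr)^2$ by way of the half-angle identity $\sinh R_i = 2\sinh(R_i/2)\cosh(R_i/2)$. Equating the two norms, cancelling the common constant, and taking positive square roots (legitimate since $\sinh R_i > 0$) then yields $\sinh(R_1)\,|\alpha_1'(t)| = \sinh(R_2)\,|\alpha_2'(t)|$.

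Finally, I would identify $|\alpha_i'(t)|$ with the angular velocity by choosing the reference geodesic $\lambda = \overline{0,(r_i,0)}$, so that $\alpha_{0,\lambda}(\gamma_{r_i,\alpha_i}) = \alpha_i$ and hence $\omega_0(\gamma_i)(t) = |\alpha_i'(t)|$; then \textbf{Proposition \ref{angularvelocitywelldefined}} guarantees that this intrinsic angular velocity is independent of the chosen boundary point, which upgrades the identity to the stated relation between $\omega_1$ and $\omega_2$. I expect the only genuine obstacle to be the algebraic collapse of the metric factor to $\tfrac14\sinh^2 R_i$, together with the care needed to confirm that the defined angular velocity (an angle measured against a reference geodesic) truly coincides with $|\alpha_i'|$ in this parametrization; everything else is isometry bookkeeping and a direct differentiation.
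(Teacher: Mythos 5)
Your proposal is correct and follows essentially the same route as the paper's first proof of this theorem: reduce by isometry invariance to pulleys centered at the origin of the Poincaré disk, compute the linear-velocity norm of the boundary curve $\gamma_{r_i,\alpha_i}$, collapse the metric factor $\tfrac{r_i^2}{(1-r_i^2)^2}$ to $\bigl(\tfrac12\sinh R_i\bigr)^2$ via the half-angle identity, and identify $|\alpha_i'|$ with the angular velocity using \textbf{Proposition \ref{angularvelocitywelldefined}}. The paper also offers a second, slightly cleaner proof in the hyperboloid model (where $\sinh R_i = r_i$ directly and no half-angle manipulation is needed), but your argument matches the disk-model proof step for step.
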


\subsection{The hyperboloid model}

Here we will use the hyperboloid model for $\mathbb{H}^{2}$. Therefore, we will make a brief presentation of it, but justifications for the facts presented and a more complete approach can be found in the books \cite{Iv} and \cite{Rat}. Note that this is another model of the hyperbolic plane, being isometric to the Poincaré-disk.

Let us denote by $\mathbb{R}^{2,1}$ the space $\mathbb{R}^3$ provided with a bilinear form, $\langle \ , \ \rangle: \mathbb{R}^3  \to \mathbb{R}$, that does not degenerate and from signature $(2,1)$. Let us adopt in $\mathbb{R}^{2,1}$ an orthonormal basis in which the bilinear form is given by
$$\langle p,q \rangle = p_1q_1 + p_2 q_2 - p_3 q_3.$$

Where $p = (p_{1}, p_{2}, p_{3})$ and $q = (q_{1}, q_{2}, q_{3})$. The set $\{ p \in \mathbb{R}^{2,1} \ |\ \langle p,p \rangle = -1 \}$ is the two-sheet hyperboloid. The hyperboloid model is the top sheet of the hyperboloid. Explicitly:
$$\mathbb{H}^2 = \{ p = (p_1, p_2, p_3) \in \mathbb{R}^{2,1} \ |\ \langle p,p \rangle = -1,\ p_3>0 \}.$$

It is a known fact that we can identify the tangent space of a point on the hyperbolic plane with its orthogonal complement:
$$T_p\mathbb{H}^2 = p^{\perp} = \{ v \in \mathbb{R}^{2,1} \ |\ \langle p,v \rangle =0 \}$$

The induced bilinear form on $T_{p}\mathbb{H}^{2}$ (i.e., $\langle u,v\rangle_{p} =\langle u,v\rangle$) is always positive defined. So we get a Riemannian metric for $\mathbb{H}^{2}$.  We denote the norm of $T_{p}\mathbb{H}^{2}$ by $||\cdot||_{p}$.

Given two points, $p$ and $q$ in $\mathbb{H}^2$, the hyperbolic metric between them, $d = d_{\mathbb{H}^2}(p,q)$, is given implicitly by:
$$\cosh (d) = - \langle p,q \rangle.$$

We record that the geodesic that connects two points, $p,q \in \mathbb{H}^2$ is the intersection of the subspace generated by $p,q$ in $\mathbb{R}^{2,1}$ with $\mathbb{H}^2$. Therefore, the geodesics in the hyperboloid model are branches of hyperbolas.

Let $C$ be a hyperbolic circumference in $\mathbb{H}^{2}$ with hyperbolic center $c_{H}$ and hyperbolic radius $R$. If $c_{H} = (0,0,1)$, then the hyperbolic circumference is an euclidean circumference of center $c_{E}$ and radius $r$ that is parallel to the plane $z = 0$ and $c_{E}$ is in the line generated by $c_{H}$. If $x = (x_{1},x_{2},x_{3})$ is any point in $C$, then $R = d_{\mathbb{H}^{2}}(c_{H},x)$ is given by the formula $cosh(R) = - \langle c_{H},x\rangle = x_{3}$. Since $c_{E}$ is in the line generated by $c_{H}$, then $c_{E} = (0,0,x_{3})$. Since $x$ is in the hyperboloid, we have that $x_{1}^{2}+x_{2}^{2}-x_{3}^{2} = -1$ and since it is in $C$, we have that $x_{1}^{2}+x_{2}^{2} = r^{2}$. So $x_{3}^{2} = 1 + r^{2}$. Using the relation $cosh(R) = x_{3}$, we get that $\cosh^{2}(R) = 1 + r^{2}$, which implies that $\sinh^{2}(R) = r^{2}$ and then $\sinh(R) = r$.

\subsection{Pulleys at the hyperboloid model}

Let $\alpha: [0,s] \rightarrow \mathbb{R}$ be a differentiable map such that $\alpha(0) = 0$ and let $\gamma_{r,\alpha}: [0,s] \rightarrow \mathbb{H}^{2}$ be the differentiable curve defined by $\gamma_{r,\alpha}(t) = r(cos(\alpha(t)),sin(\alpha((t))),\sqrt{\frac{1}{r^{2}}+1})$. It describes the movement over time of the point $p = (r,0,\sqrt{1+r^{2}}))$ inside the circle that is parallel to the plane $z = 0$, with euclidean center $(0,0,\sqrt{1+r^{2}}))$, hyperbolic center $(0,0,1)$, euclidean radius $r$ and hyperbolic radius $R$ such that $\sinh(R) = r$. The linear velocity of $p$ is given by $\gamma_{r,\alpha}'(t) = r\alpha'(t)(-sin(\alpha(t)),cos(\alpha(t)),0)$.

If we take $\lambda$ as the geodesic segment $\overline{(0,0,1),(r,0,\sqrt{1+r^{2}}))}$, then $\alpha_{(0,0,1),\lambda}(\gamma_{r,\alpha}(t)) = \alpha$. This implies that the angular velocity of $\gamma_{r,\alpha}$ is given by $\omega_{(0,0,1)}(\gamma_{r,\alpha}) = |\alpha'|$.

Let $C_{1}$ and $C_{2}$ be two pulleys in $\mathbb{H}^{2}$ and with hyperbolic radii $R_{1}$ and $R_{2}$, respectively. Suppose that both are rotating with the same norm for their linear velocities. Since the linear velocity and the angular velocity are invariant by isometries, we can suppose that $C_{1}$ and $C_{2}$ have hyperbolic center in $(0,0,1)$. The euclidean radii of $C_{1}$ and $C_{2}$ are, respectively, given by the relation $\sinh(R_{i}) = r_{i}$. Then the point $(r_{i},0, \sqrt{1+r_{i}^{2}})$ is in $C_{i}$ and its movement is described by the curve $\gamma_{i} = \gamma_{r_{i},\alpha_{i}}$, for some map $\alpha_{i}$.

The norm of the linear velocity of $C_{1}$ and $C_{2}$ is the same, i.e. for every $t \in [0,s]$:
$$||\gamma_{r_{1},\alpha_{1}}'(t)||_{\gamma_{r_{1},\alpha_{1}}(t)} = ||\gamma_{r_{2},\alpha_{2}}'(t)||_{\gamma_{r_{2},\alpha_{2}}(t)}$$

Then we have:
$$\langle \gamma_{r_{1},\alpha_{1}}'(t), \gamma_{r_{1},\alpha_{1}}'(t) \rangle=  \langle \gamma_{r_{2},\alpha_{2}}'(t), \gamma_{r_{2},\alpha_{2}}'(t) \rangle$$
$$r^{2}_{1}\alpha_{1}'(t)^{2} =  r^{2}_{2}\alpha_{2}'(t)^{2} $$

Since $\sinh(R_{i}) = r_{i}$, it is a positive number and $|\alpha_{i}'(t)| = \omega_{(0,0,1)}(\gamma_{i})(t)$, then:
$$\sinh(R_{1}) \omega_{(0,0,1)}(\gamma_{1})(t) =  \sinh(R_{2})\omega_{(0,0,1)}(\gamma_{2})(t)$$

Since the angular velocity does not depend on the choice of the initial point (\textbf{Proposition \ref{angularvelocitywelldefined}}), then we have another proof of \textbf{Theorem \ref{hyperbolic}}.

\section{The sphere}

Let's consider $\mathbb{S}^{2}$ as the unitary sphere in $\mathbb{R}^{3}$ with its induced Riemannian metric.  Then, if $p \in \mathbb{S}^{2}$ and $u,v \in T_{p}\mathbb{S}^{2}$ (we identify it with its embedding on $\mathbb{R}^{3}$), the inner product is given by $\langle u,v\rangle_{p} =\langle u,v\rangle$, where $\langle,\rangle$ is the regular inner product of $\mathbb{R}^{3}$ and $||\cdot||$ its norm. We denote the norm of $T_{p}\mathbb{S}^{2}$ by $||\cdot||_{p}$.

Let $C$ be a spherical circumference in $\mathbb{S}^{2}$ with spherical center $c_{S}$ and spherical radius $R$. This is an euclidean circumference of center $c_{E}$ and radius $r$. If $x$ is any point in $C$, then $R = d_{\mathbb{S}^{2}}(c_{S},x)$ is the length of the arc between $c_{S}$ and $x$. So $R$ is the angle between $\overline{c_{S},O}$ and $\overline{x,O}$, where $O$ is the center of the sphere (it means that it is equal to the angle $\alpha$ in the figure below). We also have that $\sin{(R)} = \frac{r}{d_{\mathbb{E}^{2}}(x,O)} = r$.

\begin{figure}[h]
\centering
\includegraphics[scale=0.5]{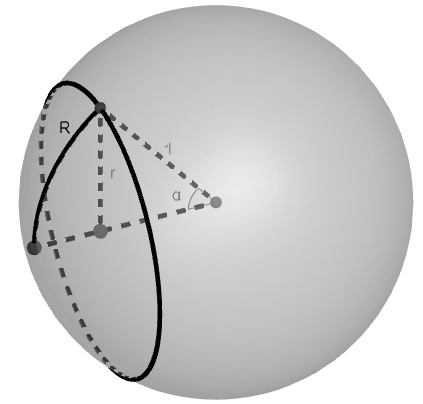}
\includegraphics[scale=0.5]{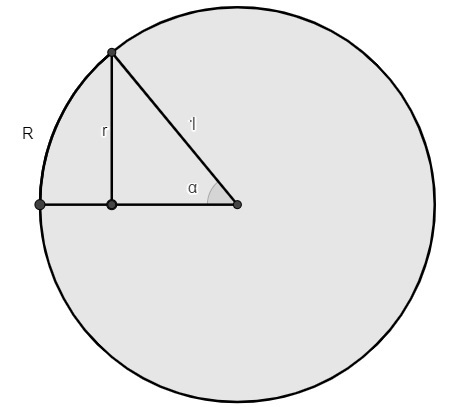}
\caption{\small The second figure represents the plane that contains all triangles of the first one}
\end{figure}

\subsection{Gears at the sphere}

Let $C_{1}$ and $C_{2}$ be two gears attached to each other in the sphere of radius $1$. Suppose that the two gears have, respectively,  $n_{1}$ and $n_{2}$ teeth and spherical radii $R_{1}$ and $R_{2}$, respectively. The movement of the gears' system depends on the width of the teeth, so we assume that $a_{1}(T) = a_{2}(T)$. Notice that a circle $C_{i}$ at the sphere of radius $1$ with spherical radius $R_{i}$ has length $2\pi\sin(R_{i})$.

    Since $n_{i}a_{i}(T)= 2\pi\sin(R_{i})$, then

    \begin{align*}
     \frac{2\pi\sin(R_{1})}{n_{1}} = a_{1}(T) = a_{2}(T) = \frac{2\pi\sin(R_{2})}{n_{2}}
    \end{align*}
    so $$\sin(R_{1})n_{2}=\sin(R_{2})n_{1}.$$

\begin{theorem}\label{sphericalgear} Let $C_{1}$ and $C_{2}$ be two gears attached to each other in the sphere. If the two gears have radii $R_{1}$ and $R_{2}$ and angular velocities are $\omega_{1}$ and $\omega_{2}$, respectively, then we have the following relation:

$$\sin(R_{1}) \omega_{1}(t) =  \sin(R_{2})\omega_{2}(t)$$
\end{theorem}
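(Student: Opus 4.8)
The plan is to mirror exactly the argument used for Theorem \ref{hyperbolicgear}, replacing the hyperbolic sine with the ordinary sine, since the only geometric input that changes between the two space forms is the formula for the circumference of a geodesic circle. The structure is a two-step reduction: first relate the numbers of teeth to the radii via the common tooth-length condition, and then combine this with the teeth-versus-angular-velocity law already established in Theorem \ref{teeth}.

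First I would invoke the computation carried out just before the statement: because the two gears are attached, their teeth have equal arc length, $a_{1}(T) = a_{2}(T)$, and since a spherical circle of radius $R_{i}$ on the unit sphere has total length $2\pi\sin(R_{i})$, dividing by the number of teeth $n_{i}$ gives
$$\frac{2\pi\sin(R_{1})}{n_{1}} = a_{1}(T) = a_{2}(T) = \frac{2\pi\sin(R_{2})}{n_{2}},$$
which rearranges to $\sin(R_{1})\,n_{2} = \sin(R_{2})\,n_{1}$, or equivalently $\frac{\sin R_{1}}{\sin R_{2}} = \frac{n_{1}}{n_{2}}$.

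Next I would apply Theorem \ref{teeth}, which is geometry-independent and therefore valid on the sphere: from $n_{1}\omega_{1}(t) = n_{2}\omega_{2}(t)$ we obtain $\frac{n_{1}}{n_{2}} = \frac{\omega_{2}(t)}{\omega_{1}(t)}$. Chaining the two ratios yields $\frac{\sin R_{1}}{\sin R_{2}} = \frac{\omega_{2}(t)}{\omega_{1}(t)}$, and cross-multiplying gives the desired relation $\sin(R_{1})\,\omega_{1}(t) = \sin(R_{2})\,\omega_{2}(t)$.

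I do not expect any genuine obstacle here, because the sole quantity that distinguishes the three space forms in this argument is the circumference formula, and the length $2\pi\sin(R)$ of a spherical circle is already recorded in the preceding paragraph (and earlier established from the relation $\sin R = r$ derived in the opening of Section 3). The one point demanding minor care is that $\sin(R_{i})$ must be strictly positive so that division and cancellation are legitimate; this holds provided each spherical radius satisfies $0 < R_{i} < \pi$, which is implicit in the gears being nondegenerate circles, and it parallels the use of positivity of $\sinh(R_{i})$ in the hyperbolic case.
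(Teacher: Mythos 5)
Your proposal is correct and follows essentially the same route as the paper: the pre-theorem computation gives $\sin(R_{1})n_{2}=\sin(R_{2})n_{1}$ from the equal tooth-length condition and the circumference formula $2\pi\sin(R_{i})$, and combining this with Theorem \ref{teeth} yields the result exactly as in the paper's proof. Your added remark about the positivity of $\sin(R_{i})$ is a sensible (and harmless) refinement that the paper leaves implicit.
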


\begin{proof}We have that $\frac{\sin{R_{1}}}{\sin{R_{2}}} = \frac{n_{1}}{n_{2}}$ and $\frac{n_{1}}{n_{2}} = \frac{\omega_{2}(t)}{\omega_{1}(t)}$ (by \textbf{Theorem \ref{teeth}}), which implies that $\frac{\sin{R_{1}}}{\sin{R_{2}}} = \frac{\omega_{2}(t)}{\omega_{1}(t)}$ and then $\sin(R_{1}) \omega_{1}(t) =  \sin(R_{2})\omega_{2}(t)$.
\end{proof}

\subsection{Pulleys at the sphere}

Let $\alpha: [0,s] \rightarrow \mathbb{R}$ be a differentiable map such that $\alpha(0) = 0$ and $\gamma_{r,\alpha}: [0,s] \rightarrow \mathbb{S}^{2}$ the differentiable curve defined by $\gamma_{r,\alpha}(t) = r(cos(\alpha(t)),sin(\alpha((t))),\sqrt{\frac{1}{r^{2}}-1})$. It describes the movement over time of the point $p = (r,0,\sqrt{1-r^{2}}))$ inside the circle that is parallel to the plane $z = 0$, with euclidean center $(0,0,\sqrt{1-r^{2}}))$, spherical center $(0,0,1)$, euclidean radius $r$ and spherical radius $R \in [0,2\pi]$ such that $\sin(R) = r$. The linear velocity of $p$ is given by $\gamma_{r,\alpha}'(t) = r\alpha'(t)(-sin(\alpha(t)),cos(\alpha(t)),0)$.

If we take $\lambda$ as the geodesic segment $\overline{(0,0,1),(r,0,\sqrt{1-r^{2}}))}$, then $\alpha_{(0,0,1),\lambda}(\gamma_{r,\alpha}(t)) = \alpha$. This implies that the angular velocity of $\gamma_{r,\alpha}$ is given by $\omega_{(0,0,1)}(\gamma_{r,\alpha}) = |\alpha'|$.

Let $C_{1}$ and $C_{2}$ be two pulleys in $\mathbb{S}^{2}$ with spherical radii $R_{1}$ and $R_{2}$, respectively. Suppose that both are rotating with the same norm for their linear velocities. Since the linear velocity and the angular velocity are invariant by isometries, we can suppose that $C_{1}$ and $C_{2}$ have spherical center in $(0,0,1)$. The euclidean radii of $C_{1}$ and $C_{2}$ are, respectively, given by the relation $\sin(R_{i}) = r_{i}$. Then the point $(r_{i},0, \sqrt{1-r_{i}^{2}})$ is in $C_{i}$ and its movement is described by the curve $\gamma_{i} = \gamma_{r_{i},\alpha_{i}}$, for some map $\alpha_{i}$.

The norm of the linear velocity of $C_{1}$ and $C_{2}$ is the same, i.e. for every $t \in [0,s]$:
$$||\gamma_{r_{1},\alpha_{1}}'(t)||_{\gamma_{r_{1},\alpha_{1}}(t)} = ||\gamma_{r_{2},\alpha_{2}}'(t)||_{\gamma_{r_{2},\alpha_{2}}(t)}$$

Then we have:
$$\langle \gamma_{r_{1},\alpha_{1}}'(t), \gamma_{r_{1},\alpha_{1}}'(t) \rangle=  \langle \gamma_{r_{2},\alpha_{2}}'(t), \gamma_{r_{2},\alpha_{2}}'(t) \rangle$$
$$r^{2}_{1}\alpha_{1}'(t)^{2} =  r^{2}_{2}\alpha_{2}'(t)^{2} $$

Since $\sin(R_{i}) = r_{i}$, it is a positive number and $|\alpha_{i}'(t)| = \omega_{(0,0,1)}(\gamma_{i})(t)$, then:
$$\sin(R_{1}) \omega_{(0,0,1)}(\gamma_{1})(t) =  \sin(R_{2})\omega_{(0,0,1)}(\gamma_{2})(t)$$

Since the angular velocity does not depend on the choice of the initial point (\textbf{Proposition \ref{angularvelocitywelldefined}}), then we have the theorem:

\begin{theorem}\label{spherical} Let $C_{1}$ and $C_{2}$ be two pulleys attached by a tensioned belt in $\mathbb{S}^{2}$. If $C_{i}$ has spherical radius $R_{i}$ and angular velocity $\omega_{i}$, then we have the following relation:
$$\sin(R_{1}) \omega_{1}(t) =  \sin(R_{2})\omega_{2}(t)$$ \eod
\end{theorem}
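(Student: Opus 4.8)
The plan is to follow exactly the strategy used for Theorem \ref{hyperbolic} in the hyperboloid model (Section 3.5), since $\mathbb{S}^2$ carries the ambient Euclidean metric with no conformal factor, so the computation will be essentially identical and in fact simpler than the Poincar\'e-disk argument of Section 3.3. The geometric content I want to extract is that the norm of the linear velocity of a point on a rotating circle equals its Euclidean radius times its angular speed, combined with the relation $\sin(R) = r$ between spherical and Euclidean radii of a circle on the unit sphere, already established at the start of Section 3.

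First I would exploit the invariance results: by Remark \ref{linearvelocityinvariant} the norm of the linear velocity is preserved by isometries, and by Proposition \ref{angularvelocityinvariant} so is the angular velocity. Since both quantities are intrinsic, I may analyze each pulley $C_i$ in the normalized position where its spherical center is $(0,0,1)$; there $C_i$ is the Euclidean circle of radius $r_i = \sin(R_i)$ lying in the plane $z = \sqrt{1-r_i^2}$, and a rotation movement of $C_i$ is described by a curve $\gamma_i = \gamma_{r_i,\alpha_i}$ for a differentiable function $\alpha_i$ with $\alpha_i(0) = 0$.

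Next I would compute the two relevant velocities directly. Differentiating $\gamma_{r,\alpha}(t) = (r\cos\alpha(t), r\sin\alpha(t), \sqrt{1-r^2})$ gives $\gamma_{r,\alpha}'(t) = r\alpha'(t)(-\sin\alpha(t), \cos\alpha(t), 0)$, so under the induced metric its norm is $|r\alpha'(t)|$. Taking $\lambda$ to be the geodesic segment from $(0,0,1)$ to the initial point of $\gamma_i$, the angle function $\alpha_{(0,0,1),\lambda}(\gamma_i)$ coincides with $\alpha_i$, whence $\omega_{(0,0,1)}(\gamma_i) = |\alpha_i'|$. Imposing the hypothesis that the belt transmits a common linear-velocity norm at each time yields $r_1^2\alpha_1'(t)^2 = r_2^2\alpha_2'(t)^2$; substituting $r_i = \sin(R_i) > 0$ and taking square roots gives $\sin(R_1)\,|\alpha_1'(t)| = \sin(R_2)\,|\alpha_2'(t)|$, that is $\sin(R_1)\,\omega_{(0,0,1)}(\gamma_1)(t) = \sin(R_2)\,\omega_{(0,0,1)}(\gamma_2)(t)$.

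Finally, to pass from the angular velocity measured at the specific point $(r_i,0,\sqrt{1-r_i^2})$ to the angular velocity of the pulley as a whole, I would invoke Proposition \ref{angularvelocitywelldefined}, which guarantees that $\omega_{(0,0,1)}(\gamma_i)$ is independent of the chosen point on $C_i$. I do not expect a genuine obstacle: because the spherical metric is simply the ambient Euclidean one, no conformal-factor bookkeeping is required, unlike in Section 3.3. The only point demanding a little care is the domain restriction noted in the remark on $\mathbb{S}^2$, namely that the angle function $\alpha_{p,\lambda}$ is undefined when the curve meets the antipode of $p$; but this never occurs for the circular trajectories considered here, since each such circle avoids the antipode of its own center.
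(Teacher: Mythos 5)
Your proposal is correct and follows essentially the same route as the paper: normalize both pulleys to have spherical center $(0,0,1)$ using the invariance of linear and angular velocity under isometries, parametrize the motion by $\gamma_{r,\alpha}$, compute $\|\gamma_{r,\alpha}'(t)\| = |r\alpha'(t)|$ in the ambient Euclidean metric, equate the linear-velocity norms, and substitute $r_i = \sin(R_i)$ before invoking Proposition \ref{angularvelocitywelldefined}. No substantive differences from the paper's argument.
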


\end{document}